\newtheorem{theorem}{Theorem}[section]
\newtheorem{lemma}[theorem]{Lemma}
\newtheorem{corollary}[theorem]{Corollary}
\theoremstyle{definition}
\theoremstyle{remark}
\newtheorem{remark}[theorem]{Remark}
\numberwithin{equation}{section}
\begin{document}
\title[Operator parallelogram law]{An operator extension of the parallelogram law and related norm inequalities}
\author[M.S. Moslehian]{Mohammad Sal Moslehian}
\address{Department of Mathematics, Center of Excellence in Analysis on Algebraic Structures (CEAAS), Ferdowsi University of Mashhad, P.O. Box 1159, Mashhad 91775, Iran.}
\email{moslehian@ferdowsi.um.ac.ir and moslehian@ams.org}

\dedicatory{Dedicated to my teacher Aziz Atai Langroudi with respect and affection}

\subjclass[2010]{Primary 47A63; Secondary 46C15, 47A30, 47B10, 47B15, 15A60.}

\keywords{Bochner integral; Schatten $p$-norm; norm inequality;
parallelogram law; unitarily invariant norm; convex function; Bohr's
inequality; inner product space.}

\begin{abstract} We establish a general operator parallelogram law
concerning a characterization of inner product spaces, get an
operator extension of Bohr's inequality and present several norm
inequalities. More precisely, let ${\mathfrak A}$
be a $C^*$-algebra, $T$ be a locally compact Hausdorff space
equipped with a Radon measure $\mu$ and let $(A_t)_{t\in T}$ be a
continuous field of operators in ${\mathfrak A}$ such that the
function $t \mapsto A_t$ is norm continuous on $T$ and the function
$t \mapsto \|A_t\|$ is integrable. If $\alpha: T \times T \to
\mathbb{C}$ is a measurable function such that $\overline
{\alpha(t,s)}\alpha(s,t)=1$ for all $t, s \in T$, then we show that
\begin{align*}
\int_T\int_T&\left|\alpha(t,s) A_t-\alpha(s,t) A_s\right|^2d\mu(t)d\mu(s)+\int_T\int_T\left|\alpha(t,s) B_t-\alpha(s,t) B_s\right|^2d\mu(t)d\mu(s) \nonumber\\
&= 2\int_T\int_T\left|\alpha(t,s) A_t-\alpha(s,t)
B_s\right|^2d\mu(t)d\mu(s) - 2\left|\int_T(A_t-B_t)d\mu(t)\right|^2\,.
\end{align*}

\end{abstract}
\maketitle


\section{Introduction}

Let ${\mathfrak A}$ be a $C^*$-algebra and let $T$ be a locally
compact Hausdorff space. A field $(A_t)_{t\in T}$ of operators in
${\mathfrak A}$ is called a continuous field of operators if the
function $t \mapsto A_t$ is norm continuous on $T$. If $\mu(t)$ is a
Radon measure on $T$ and the function $t \mapsto \|A_t\|$ is
integrable, one can form the Bochner integral $\int_{T}A_t{\rm
d}\mu(t)$, which is the unique element in ${\mathfrak A}$ such that
$$\varphi\left(\int_TA_t{\rm d}\mu(t)\right)=\int_T\varphi(A_t){\rm d}\mu(t)$$
for every linear functional $\varphi$ in the norm dual ${\mathfrak
A}^*$ of ${\mathfrak A}$; see \cite[Section 4.1]{H-P} and \cite{H-P-P}.

Let ${\mathbb B}({\mathscr H})$ be the algebra of all bounded linear
operators on a separable complex Hilbert space ${\mathscr H}$
endowed with inner product $\langle \cdot , \cdot \rangle $. We denote the absolute value of $A
\in {\mathbb B}({\mathscr H})$ by $|A|=(A^*A)^{1/2}$. For $x, y \in
{\mathscr H}$, the rank one operator $x \otimes y$ is defined on
${\mathscr H}$ by $(x\otimes y)(z)=\langle z,y\rangle x$.

\noindent Let $A\in \mathbb{B}(\mathscr{H})$ be a compact operator
and let $0 < p < \infty$. The Schatten $p$-norm ($p$-quasi-norm) for
$1 \leq p < \infty \,(0< p < 1)$ is defined by $\|A\|_p=(\textrm{tr}
|A|^p)^{1/p}$, where $\textrm{tr}$ is the usual trace functional.
Clearly
\begin{eqnarray}\label{trace}
\left\|\,|A|^p\,\right\|_1=\|A\|_p^p
\end{eqnarray}
for $p>0$. For $p>0$, the Schatten $p$-class, denoted by
$\mathcal{C}_p$, is defined to be the two-sided ideal in
$\mathbb{B}(\mathscr{H})$ of those compact operators $A$ for which
$\|A\|_p$ is finite. In particular, $\mathcal{C}_1$ and
$\mathcal{C}_2$ are the trace class and the Hilbert-Schmidt class,
respectively. For $1 \leq p <\infty$, $\mathcal{C}_p$ is a Banach
space; in particular the triangle inequality holds. However, for
$0<p<1$, the quasi-norm $\|.\|_p$ does not satisfy the triangle
inequality. In addition to Schatten $p$-norms and operator norm,
there are other interesting norms defined on some ideals contained
in the ideal of compact operators. A unitarily invariant norm
$|||\cdot|||$ is defined only on a norm ideal
$\mathcal{C}_{|||\cdot|||}$ associated with it and has the property
$|||UAV|||=|||A|||$, where $U$ and $V$ are unitaries and $A
\in\mathcal{C}_{|||\cdot|||}$. For more information on the theory of
the unitarily invariant norms the reader is referred to \cite{SIM}.

\noindent One can show that
\begin{eqnarray}\label{00}
\sum_{i,j=1}^n\|x_i-x_j\|^2+ \sum_{i,j=1}^n\|y_i-y_j\|^2 =
2\sum_{i,j=1}^n\|x_i-y_j\|^2 -2\left\|\sum_{i=1}^n
(x_i-y_i)\right\|^2
\end{eqnarray}
holds in an inner product space, which is indeed a generalization of the classical \emph{parallelogram law}:
$$|z+w|^2+|z-w|^2=2|z|^2+2|w|^2 \qquad (z, w \in \mathbb{C})\,.$$
There are several extensions of parallelogram law among them we
could refer the interested reader to \cite{C-C-M-P, EGE, KAT, ZEN,
Z-F}. Generalizations of the parallelogram law for the Schatten
$p$-norms have been given in the form of the celebrated Clarkson
inequalities (see \cite{H-K} and references therein). Since
$\mathcal{C}_2$ is a Hilbert space under the inner product $\langle
A, B\rangle=\textrm{tr} (B^*A)$, it follows from \eqref{00} that if
$A_1, \cdots, A_n, B_1, \cdots, B_n \in \mathcal{C}_2$ with
$\sum_{i=1}^n(A_i-B_i)=0$, then
\begin{eqnarray}\label{4}
\sum_{i,j=1}^n\|A_i-A_j\|_2^2+\sum_{i,j=1}^n\|B_i-B_j\|_2^2 =
2\sum_{i,j=1}^n\|A_i-B_j\|_2^2\,.
\end{eqnarray}

The classical \emph{Bohr's inequality} states that for any $z, w \in
{\mathbb C}$ and any positive real numbers $r, s$ with
$\frac{1}{r}+\frac{1}{s}=1$,
$$|z+w|^2 \leq r |z|^2 + s |w|^2.$$
\noindent Many interesting operator generalizations of this
inequality have been obtained; cf. \cite{A-B-P, C-P, HIR, M-P-P, ZHA}.

In this paper, we establish an extended operator parallelogram law and get a generalization of Bohr's inequality.
We also present several unitarily invariant and Schatten $p$-norm inequalities. Our results can be
regarded as extensions of main results of \cite{H-K-M}.


\section{Joint extensions of the parallelogram law and Bohr's inequality}

We start our work with following clear lemma. The first equality is called
the operator parallelogram law.

\begin{lemma}\label{lem1}
Let $A, B \in \mathbb{B}(\mathscr{H})$. Then
\begin{eqnarray*}
|A+B|^2+|A-B|^2=2|A|^2+2|B|^2\qquad (\textrm{a});
\end{eqnarray*}and
\begin{eqnarray*}
|A+B|^2-|A-B|^2=4\textrm{Re}(A^*B)\qquad (\textrm{b})
\end{eqnarray*}
\end{lemma}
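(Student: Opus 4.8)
The plan is to reduce everything to the defining relation $|X|^{2}=X^{*}X$ and a direct expansion of the two absolute values. First I would write $|A+B|^{2}=(A+B)^{*}(A+B)=A^{*}A+A^{*}B+B^{*}A+B^{*}B$ and, in the same way, $|A-B|^{2}=(A-B)^{*}(A-B)=A^{*}A-A^{*}B-B^{*}A+B^{*}B$. These two expansions carry the entire computational content of the lemma, so the rest is just addition and subtraction.

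For part (a) I would add the two expansions. The cross terms $A^{*}B$ and $B^{*}A$ cancel, and what remains is $2A^{*}A+2B^{*}B=2|A|^{2}+2|B|^{2}$, which is exactly the asserted parallelogram law.

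For part (b) I would instead subtract the expansion of $|A-B|^{2}$ from that of $|A+B|^{2}$. Now the diagonal terms $A^{*}A$ and $B^{*}B$ cancel while the cross terms double, leaving $2A^{*}B+2B^{*}A$. Recalling that the real part of an operator is $\textrm{Re}(X)=\tfrac{1}{2}(X+X^{*})$ and observing that $B^{*}A=(A^{*}B)^{*}$, this sum equals $2\bigl(A^{*}B+(A^{*}B)^{*}\bigr)=4\,\textrm{Re}(A^{*}B)$, as claimed.

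Since $A$ and $B$ need not commute, the one point to watch is the order of the factors in each product; I would take care never to identify $A^{*}B$ with $B^{*}A$, and would use $B^{*}A=(A^{*}B)^{*}$ only to recombine the cross terms into a real part. Beyond this bookkeeping there is no genuine obstacle, which is precisely why the statement is labelled as clear.
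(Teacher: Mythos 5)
Your proof is correct and is exactly the computation the paper has in mind; the paper simply labels the lemma ``clear'' and omits the expansion of $|A\pm B|^{2}=(A\pm B)^{*}(A\pm B)$ that you carry out. Your care with the noncommutativity and the identity $B^{*}A=(A^{*}B)^{*}$ is precisely what makes part (b) come out as $4\,\textrm{Re}(A^{*}B)$.
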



We now state our main result, which is an operator version of equality \eqref{00}. As we will see later, it is indeed a joint operator extension of Bohr and parallelogram inequalities.

\begin{theorem}\label{th1}
Let ${\mathfrak A}$ be a $C^*$-algebra, $T$ be a locally compact
Hausdorff space equipped with a Radon measure $\mu$ and let
$(A_t)_{t\in T}$ be a continuous field of operators in ${\mathfrak
A}$ such that the function $t \mapsto A_t$ is norm continuous on $T$
and the function $t \mapsto \|A_t\|$ is integrable. Let $\alpha: T \times T \to \mathbb{C}$ be a measurable function such that $\overline {\alpha(t,s)}\alpha(s,t)=1$ for all $t, s \in T$. Then
\begin{align*}
\int_T\int_T&\left|\alpha(t,s) A_t-\alpha(s,t) A_s\right|^2d\mu(t)d\mu(s)+\int_T\int_T\left|\alpha(t,s) B_t-\alpha(s,t) B_s\right|^2d\mu(t)d\mu(s) \nonumber\\
&= 2\int_T\int_T\left|\alpha(t,s) A_t-\alpha(s,t)
B_s\right|^2d\mu(t)d\mu(s) - 2\left|\int_T(A_t-B_t)d\mu(t)\right|^2\,.
\end{align*}
\end{theorem}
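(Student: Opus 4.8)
The plan is to reduce the stated identity to a routine expansion of the three integrands, the only genuine analytic input being the linearity and Fubini-type behaviour of the Bochner integral. Throughout write $|X|^2=X^*X$, set $S_A=\int_T A_t\,d\mu(t)$ and $S_B=\int_T B_t\,d\mu(t)$, put $P_A=\int_T\int_T|\alpha(t,s)|^2|A_t|^2\,d\mu(t)\,d\mu(s)$ and $P_B=\int_T\int_T|\alpha(t,s)|^2|B_t|^2\,d\mu(t)\,d\mu(s)$, and denote by $Q(X,Y)=\int_T\int_T|\alpha(t,s)X_t-\alpha(s,t)Y_s|^2\,d\mu(t)\,d\mu(s)$ the generic double integral; I assume the continuity and integrability hypotheses hold for both fields and are strong enough that every Bochner integral below exists in ${\mathfrak A}$.

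First I would expand each integrand via $|X|^2=X^*X$. For the diagonal case,
\begin{align*}
|\alpha(t,s)A_t-\alpha(s,t)A_s|^2 &= |\alpha(t,s)|^2|A_t|^2+|\alpha(s,t)|^2|A_s|^2 \\
&\quad -\overline{\alpha(t,s)}\alpha(s,t)\,A_t^*A_s-\overline{\alpha(s,t)}\alpha(t,s)\,A_s^*A_t ,
\end{align*}
and here the hypothesis $\overline{\alpha(t,s)}\alpha(s,t)=1$, together with its complex conjugate $\overline{\alpha(s,t)}\alpha(t,s)=1$, collapses both cross coefficients to $1$, so the cross terms reduce to $-A_t^*A_s-A_s^*A_t$, with $\alpha$ eliminated. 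The expansions of $|\alpha(t,s)B_t-\alpha(s,t)B_s|^2$ and $|\alpha(t,s)A_t-\alpha(s,t)B_s|^2$ are entirely analogous.

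Next I would integrate. The two diagonal contributions merge after the change of dummy variables $t\leftrightarrow s$, since $\int_T\int_T|\alpha(s,t)|^2|A_s|^2\,d\mu(t)\,d\mu(s)=P_A$; thus the diagonal part of $Q(A,A)$ is $2P_A$, that of $Q(B,B)$ is $2P_B$, and that of $Q(A,B)$ is $P_A+P_B$. For the cross terms I would use that left and right multiplication by a fixed element of ${\mathfrak A}$ are bounded linear maps, hence commute with the Bochner integral, and combine this with Fubini's theorem to factor products such as $\int_T\int_T A_t^*B_s\,d\mu(t)\,d\mu(s)=\big(\int_T A_t\,d\mu(t)\big)^*\big(\int_T B_s\,d\mu(s)\big)=S_A^*S_B$. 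Collecting everything gives
\begin{align*}
Q(A,A)&=2P_A-2|S_A|^2, \qquad Q(B,B)=2P_B-2|S_B|^2, \\
Q(A,B)&=P_A+P_B-S_A^*S_B-S_B^*S_A .
\end{align*}

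Finally I would assemble the two sides. Expanding the last term on the right, $\big|\int_T(A_t-B_t)\,d\mu(t)\big|^2=|S_A-S_B|^2=|S_A|^2-S_A^*S_B-S_B^*S_A+|S_B|^2$, so that $2Q(A,B)-2|S_A-S_B|^2=2P_A+2P_B-2|S_A|^2-2|S_B|^2$, which is precisely $Q(A,A)+Q(B,B)$, the left-hand side. The only delicate point is the analytic justification invoked above — the validity of Fubini's theorem for the ${\mathfrak A}$-valued integrands and the commutation of multiplication with the integral — while the algebra is routine; observe in particular that $\alpha$ persists only inside $P_A$ and $P_B$, which occur with identical coefficients on both sides and hence never have to be computed.
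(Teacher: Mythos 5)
Your argument is correct and reaches the stated identity, but by a genuinely different route from the paper. You expand each integrand as $X^*X$, observe that the hypothesis $\overline{\alpha(t,s)}\alpha(s,t)=1$ kills $\alpha$ in the cross terms, and then do bookkeeping with the quantities $P_A$, $P_B$, $S_A$, $S_B$; the whole identity becomes a two-line cancellation. The paper instead never expands anything: it applies the operator parallelogram law $|X+Y|^2+|X-Y|^2=2|X|^2+2|Y|^2$ twice to the integrand, with a judicious regrouping of $\alpha(t,s)A_t-\alpha(s,t)A_s\pm(\alpha(t,s)B_t-\alpha(s,t)B_s)$, and finishes with the polarization identity $|X+Y|^2-|X-Y|^2=4\,\mathrm{Re}(X^*Y)$ (Lemma~\ref{lem1}(b)) plus Fubini to produce the term $-2\left|\int_T(A_t-B_t)d\mu(t)\right|^2$. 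Your version is more elementary and makes completely transparent \emph{why} the condition on $\alpha$ is exactly what is needed; the paper's version exhibits the result as a literal instance of the parallelogram law and reuses its Lemma~\ref{lem1}. One point where the paper's grouping is genuinely safer: your intermediate objects $P_A=\int_T\int_T|\alpha(t,s)|^2|A_t|^2\,d\mu(t)d\mu(s)$ need not exist under the stated hypotheses, since only $|\alpha(t,s)\alpha(s,t)|=1$ is assumed and $|\alpha(t,s)|$ itself can be unbounded; the cancellation ``$P_A$ and $P_B$ appear with the same coefficient on both sides'' is only legitimate once they are finite elements of ${\mathfrak A}$. The paper avoids ever isolating such a term --- every double integral it writes down is either one of the integrals in the statement or has integrand controlled by $\|A_t-B_t\|\,\|A_s-B_s\|$. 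You flag the integrability assumption explicitly, so this is a caveat rather than a gap, but if you want your proof to work under exactly the hypotheses for which the theorem's own integrals converge, you should keep the diagonal terms of $Q(A,A)+Q(B,B)$ and of $2Q(A,B)$ paired together rather than naming them separately.
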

\begin{proof}
\begin{align*}
\int_T&\int_T\big|\alpha(t,s) A_t-\alpha(s,t) A_s\big|^2d\mu(t)d\mu(s)+\int_T\int_T\big|\alpha(t,s) B_t-\alpha(s,t) B_s\big|^2d\mu(t)d\mu(s)\\
&=\int_T\int_T\big(\big|\alpha(t,s) A_t-\alpha(s,t) A_s\big|^2+\big|\alpha(t,s) B_t-\alpha(s,t) B_s\big|^2\big)d\mu(t)d\mu(s)\\
&=\int_T\int_T\Big(\frac{1}{2}\big|\alpha(t,s) A_t-\alpha(s,t) A_s + \alpha(t,s) B_t-\alpha(s,t) B_s\big|^2\\
&\quad + \frac{1}{2}\big| \alpha(t,s) A_t-\alpha(s,t) A_s - \alpha(t,s) B_t+\alpha(s,t) B_s \big|^2\Big)d\mu(t)d\mu(s)\\
&\qquad\qquad \qquad\qquad \qquad\qquad\qquad\qquad \qquad\qquad\qquad \qquad \qquad (\textrm{by Lemma\, \ref{lem1}(a)})\\
&=\int_T\int_T \big(\frac{1}{2}\big|(\alpha(t,s) A_t-\alpha(s,t) B_s)-(\alpha(s,t) A_s-\alpha(t,s) B_t)\big|^2 \\
&\quad + \frac{1}{2}\big|(\alpha(t,s) A_t-\alpha(t,s) B_t)-(\alpha(s,t) A_s-\alpha(s,t) B_s)\big|^2\Big)d\mu(t)d\mu(s)\\
&=\int_T\int_T \Big[\Big(\big|\alpha(t,s) A_t-\alpha(s,t) B_s\big|^2+\big|\alpha(s,t) A_s-\alpha(t,s) B_t\big|^2\\
& \quad -\frac{1}{2}\big|(\alpha(t,s) A_t-\alpha(s,t) B_s)+(\alpha(s,t) A_s-\alpha(t,s) B_t)\big|^2\Big)\\
&\quad +\frac{1}{2}\big|(\alpha(t,s) A_t-\alpha(t,s) B_t)-(\alpha(s,t) A_s-\alpha(s,t) B_s)\big|^2\Big]d\mu(t)d\mu(s)\\
& \qquad\qquad \qquad\qquad\qquad\qquad \qquad\qquad \qquad\qquad \qquad\qquad \qquad (\textrm{by Lemma\, \ref{lem1}(a)})\\
&= \int_T\int_T\big|\alpha(t,s) A_t-\alpha(s,t) B_s\big|^2d\mu(t)d\mu(s)+ \int_T\int_T\big|\alpha(t,s) A_t-\alpha(s,t) B_s\big|^2d\mu(t)d\mu(s)\\
&\quad -\frac{1}{2}\int_T\int_T\Big(\big|(\alpha(t,s) A_t-\alpha(t,s) B_t)+(\alpha(s,t) A_s-\alpha(s,t) B_s)\big|^2 \\
& \quad -\big|(\alpha(t,s) A_t-\alpha(t,s) B_t)-(\alpha(s,t) A_s-\alpha(s,t) B_s)\big|^2\Big)d\mu(t)d\mu(s)\\
&= 2\int_T\int_T\big|\alpha(t,s) A_t-\alpha(s,t) B_s\big|^2d\mu(t)d\mu(s)\\
& \quad -2\,\textrm{Re}\int_T\int_T(\alpha(t,s) A_t-\alpha(t,s) B_t)^*(\alpha(s,t) A_s-\alpha(s,t) B_s)d\mu(t)d\mu(s)\\
&\qquad\qquad \qquad\qquad \qquad\qquad\qquad\qquad \qquad\qquad\qquad \qquad \qquad (\textrm{by Lemma\, \ref{lem1}(b)})\\
&=2\int_T\int_T\big|\alpha(t,s) A_t-\alpha(s,t) B_s\big|^2d\mu(t)d\mu(s)\\
&\quad -2\,\textrm{Re}\Big[\Big(\int_T(A_t-B_t)d\mu(t)\Big)^*\Big(\int_T (A_s-B_s)d\mu(s)\Big)\Big]\\
&=2\int_T\int_T\big|\alpha(t,s) A_t-\alpha(s,t) B_s\big|^2d\mu(t)d\mu(s) - 2\big|\int_T( A_t- B_t)d\mu(t)\big |^2\,.
\end{align*}
\end{proof}

If we let $T=\{1, \cdots, n\}$, $\mu$ be the counting measure on $T$
and $\alpha(i,j)=\sqrt{\frac{r_i}{r_j}}$, where $r_i> 0\,\,(1 \leq i
\leq n)$, in Theorem \ref{th1}, then we get
\begin{corollary}[Generalized Parallelogram Law]\label{cor0}
Let $A_1, \cdots, A_n, B_1, \cdots, B_n \in \mathbb{B}(\mathscr{H})$
and let $r_1, \cdots, r_n$ be positive numbers. Then
\begin{eqnarray}\label{oit}
\sum_{1\leq i<j\leq n}\left|\sqrt{\frac{r_i}{r_j}}A_i-\sqrt{\frac{r_j}{r_i}}A_j\right|^2&+&\sum_{1\leq i<j\leq n}\left|\sqrt{\frac{r_i}{r_j}}B_i-\sqrt{\frac{r_j}{r_i}}B_j\right|^2 \nonumber\\
&=&\sum_{i,j=1}^n\left|\sqrt{\frac{r_i}{r_j}}A_i-\sqrt{\frac{r_j}{r_i}}B_j\right|^2
- \left|\sum_{i=1}^n(A_i-B_i)\right|^2\,.
\end{eqnarray}
\end{corollary}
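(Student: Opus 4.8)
The plan is to obtain the Corollary as a direct specialization of Theorem \ref{th1}. First I would take $T=\{1,\dots,n\}$ with the counting measure $\mu$, so that every double integral $\int_T\int_T f(t,s)\,d\mu(t)d\mu(s)$ collapses to the double sum $\sum_{i,j=1}^n f(i,j)$, and set $\alpha(i,j)=\sqrt{r_i/r_j}$ with each $r_i>0$. Since these values are positive reals, the hypothesis of Theorem \ref{th1} is immediate: $\overline{\alpha(i,j)}\,\alpha(j,i)=\sqrt{r_i/r_j}\,\sqrt{r_j/r_i}=1$. Substituting into Theorem \ref{th1} yields
\begin{align*}
\sum_{i,j=1}^n &\Big|\sqrt{\tfrac{r_i}{r_j}}A_i-\sqrt{\tfrac{r_j}{r_i}}A_j\Big|^2+\sum_{i,j=1}^n\Big|\sqrt{\tfrac{r_i}{r_j}}B_i-\sqrt{\tfrac{r_j}{r_i}}B_j\Big|^2\\
&=2\sum_{i,j=1}^n\Big|\sqrt{\tfrac{r_i}{r_j}}A_i-\sqrt{\tfrac{r_j}{r_i}}B_j\Big|^2-2\Big|\sum_{i=1}^n(A_i-B_i)\Big|^2,
\end{align*}
which is the desired equality written with full double sums on the left and a leading factor of $2$ throughout.

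The remaining work is to reduce each of the two ``pure'' double sums on the left to a sum over $i<j$. Here the key observations are, first, that the diagonal terms vanish: when $i=j$ we have $\alpha(i,i)=1$, so $\big|\sqrt{r_i/r_i}A_i-\sqrt{r_i/r_i}A_i\big|^2=|A_i-A_i|^2=0$, and likewise for $B$. Second, the off-diagonal terms pair up symmetrically. Since for any $X\in\mathbb{B}(\mathscr{H})$ one has $|-X|^2=(-X)^*(-X)=X^*X=|X|^2$, the $(j,i)$ summand $\big|\sqrt{r_j/r_i}A_j-\sqrt{r_i/r_j}A_i\big|^2$ is the square of the negative of the $(i,j)$ summand, hence equals $\big|\sqrt{r_i/r_j}A_i-\sqrt{r_j/r_i}A_j\big|^2$. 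Combining these two facts gives
$$\sum_{i,j=1}^n\Big|\sqrt{\tfrac{r_i}{r_j}}A_i-\sqrt{\tfrac{r_j}{r_i}}A_j\Big|^2=2\sum_{1\le i<j\le n}\Big|\sqrt{\tfrac{r_i}{r_j}}A_i-\sqrt{\tfrac{r_j}{r_i}}A_j\Big|^2,$$
and the same for the $B$-sum.

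Inserting these into the displayed identity and dividing both sides by $2$ then produces exactly the claimed equality \eqref{oit}. The one point to be careful about is that this symmetrization must \emph{not} be applied to the mixed term $\sum_{i,j=1}^n\big|\sqrt{r_i/r_j}A_i-\sqrt{r_j/r_i}B_j\big|^2$ on the right: because $A$ and $B$ enter asymmetrically, its $(i,j)$ and $(j,i)$ summands are in general unequal and its diagonal terms $|A_i-B_i|^2$ do not vanish, so it must be kept as a full double sum. Recognizing which sums are symmetric and which are not is the only real subtlety; once that is settled, the proof is a routine bookkeeping of the factor of $2$.
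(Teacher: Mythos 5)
Your proposal is correct and follows exactly the route the paper takes: specializing Theorem \ref{th1} to $T=\{1,\dots,n\}$ with counting measure and $\alpha(i,j)=\sqrt{r_i/r_j}$. The paper states this specialization in one sentence and leaves the bookkeeping implicit; you supply the missing details (vanishing diagonal, symmetry of the pure sums giving $\sum_{i,j}=2\sum_{i<j}$, division by $2$, and the correct observation that the mixed sum is \emph{not} symmetric), all of which are accurate.
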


If we set $B_1=\cdots=B_n=0$ in Corollary \ref{cor0}, then the following extension of parallelogram law is obtained
\begin{corollary} \cite[Theorem 4.2]{Z-F}
Suppose that $A_1, \cdots, A_n \in \mathbb{B}(\mathscr{H})$ and
$r_1, \cdots, r_n$ are positive numbers with
$\sum_{i=1}^n\frac{1}{r_i}=1$. Then
\begin{eqnarray}\label{zf}
(0 \leq) \sum_{1 \leq i<j\leq
n}\left|\sqrt{\frac{r_i}{r_j}}A_i-\sqrt{\frac{r_j}{r_i}}A_j\right|^2=
\sum_{i=1}^n r_i\left|A_i\right|^2
-\left|\sum_{i=1}^nA_i\right|^2\,.
\end{eqnarray}
\end{corollary}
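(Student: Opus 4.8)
The plan is to obtain this corollary as a direct specialization of Corollary~\ref{cor0}, taking $B_1=\cdots=B_n=0$. First I would substitute $B_i=0$ for every $i$ into the generalized parallelogram law \eqref{oit}. On the left-hand side the second sum, whose summands are $\left|\sqrt{r_i/r_j}\,B_i-\sqrt{r_j/r_i}\,B_j\right|^2$, vanishes identically, so only the $A$-sum remains. On the right-hand side the mixed double sum reduces to $\sum_{i,j=1}^n\left|\sqrt{r_i/r_j}\,A_i\right|^2$ and the last term to $\left|\sum_{i=1}^n A_i\right|^2$.

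The only substantive step is then an algebraic simplification of this double sum. Since $\sqrt{r_i/r_j}$ is a positive real scalar, one has $\left|\sqrt{r_i/r_j}\,A_i\right|^2=(r_i/r_j)\,|A_i|^2$, and I would separate the summation indices as
\[
\sum_{i,j=1}^n\frac{r_i}{r_j}\,|A_i|^2=\left(\sum_{j=1}^n\frac{1}{r_j}\right)\sum_{i=1}^n r_i\,|A_i|^2 .
\]
Invoking the hypothesis $\sum_{j=1}^n 1/r_j=1$ collapses the bracketed factor to $1$, leaving $\sum_{i=1}^n r_i\,|A_i|^2$. Assembling the three pieces gives exactly \eqref{zf}.

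Finally, the parenthetical inequality $(0\le)$ requires no extra argument: the left-hand side is a finite sum of terms of the form $|X|^2=X^*X$, each a positive operator, hence the whole expression, and therefore the right-hand side, is positive.

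I do not expect a genuine obstacle here. The one point that deserves attention, rather than being hard, is that the normalization $\sum_j 1/r_j=1$ is precisely what forces the double sum to collapse into the clean weighted sum $\sum_i r_i\,|A_i|^2$; this hypothesis is exactly what distinguishes the present statement from the unnormalized Corollary~\ref{cor0}. The remaining care is purely bookkeeping: keeping the roles of the indices $i$ and $j$ straight when factoring $\sum_{i,j}(r_i/r_j)|A_i|^2$ into a product of two single sums.
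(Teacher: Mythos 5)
Your proposal is correct and matches the paper's intended argument exactly: the paper obtains \eqref{zf} by setting $B_1=\cdots=B_n=0$ in Corollary~\ref{cor0}, and your computation $\sum_{i,j}\frac{r_i}{r_j}|A_i|^2=\bigl(\sum_j \frac{1}{r_j}\bigr)\sum_i r_i|A_i|^2=\sum_i r_i|A_i|^2$ is precisely the (unstated) simplification the paper relies on.
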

\begin{remark}
If $n=2$ and $t:=\frac{r_1}{r_2}>0$, then operator equality \eqref{zf} can be restated as the following form which is, as noted in \cite{Z-F}, a generalization of \cite[Theorem 1]{HIR} and \cite[Theorem 3]{C-P}.
$$|A_1+A_2|^2+\frac{1}{t}|tA_1-A_2|^2=(1+t)|A_1|^2+(1+\frac{1}{t})|A_2|^2\,.$$
\end{remark}
We also infer the following extension of Bohr's inequality \cite[Theorem 7]{ZHA} from \eqref{zf}.
$$\left|\sum_{i=1}^nA_i\right|^2 \leq \sum_{i=1}^n r_i \left|A_i\right|^2\,.$$

The preceding inequality is, in turn, a special case of the following general form of the Bohr inequality being easily deduced from the fact that the left hand side of the operator equality in Theorem \ref{th1} is a positive element of the $C^*$-algebra ${\mathfrak A}$.

\begin{corollary}[Generalized Operator Bohr's Inequality]
Let ${\mathfrak A}$ be a $C^*$-algebra, $T$ be a locally compact
Hausdorff space equipped with a Radon measure $\mu$ and let
$(A_t)_{t\in T}$ be a continuous field of operators in ${\mathfrak
A}$ such that the function $t \mapsto A_t$ is norm continuous on $T$
and the function $t \mapsto \|A_t\|$ is integrable. Let $\alpha: T \times T \to \mathbb{C}$ be a measurable function such that $\overline {\alpha(t,s)}\alpha(s,t)=1$ for all $t, s \in T$. Then
\begin{align*}
\big|\int_T(A_t-B_t)d\mu(t)\big|^2 \leq \int_T\int_T\big|\alpha(t,s) A_t-\alpha(s,t) B_s\big|^2d\mu(t)d\mu(s)\,.
\end{align*}
\end{corollary}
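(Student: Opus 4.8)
The plan is to read off the inequality directly from the operator identity in Theorem \ref{th1} by discarding a manifestly nonnegative term. First I would apply Theorem \ref{th1} and solve the identity there for the double integral on its right-hand side, obtaining
\[
2\int_T\int_T\big|\alpha(t,s) A_t-\alpha(s,t) B_s\big|^2\,d\mu(t)\,d\mu(s)
= \mathcal{L} + 2\left|\int_T(A_t-B_t)\,d\mu(t)\right|^2,
\]
where $\mathcal{L}$ denotes the entire left-hand side of the equation in Theorem \ref{th1}, namely the sum $\int_T\int_T|\alpha(t,s)A_t-\alpha(s,t)A_s|^2\,d\mu(t)\,d\mu(s) + \int_T\int_T|\alpha(t,s)B_t-\alpha(s,t)B_s|^2\,d\mu(t)\,d\mu(s)$.

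The crux of the argument is to show that $\mathcal{L}$ is a positive element of the $C^*$-algebra ${\mathfrak A}$. Each integrand appearing in $\mathcal{L}$ has the form $|X|^2 = X^*X$, which is positive in ${\mathfrak A}$, so the whole matter reduces to checking that the Bochner integral of a norm-continuous, integrable field of positive operators is again positive. I would verify this via the defining property of the Bochner integral recorded in the Introduction: for every positive linear functional $\varphi \in {\mathfrak A}^*$ one has $\varphi\big(\int_T X_t\,d\mu(t)\big) = \int_T \varphi(X_t)\,d\mu(t) \geq 0$, since $\varphi(X_t) \geq 0$ pointwise and $\mu$ is a positive measure. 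Because an element on which every positive functional (equivalently, every state) takes a nonnegative value is itself positive, it follows that $\mathcal{L} \geq 0$.

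Once $\mathcal{L} \geq 0$ is established, the conclusion is a one-line rearrangement: the displayed identity gives $2\left|\int_T(A_t-B_t)\,d\mu(t)\right|^2 \leq 2\int_T\int_T|\alpha(t,s)A_t-\alpha(s,t)B_s|^2\,d\mu(t)\,d\mu(s)$, and dividing by $2$ yields precisely the asserted bound. I expect the only point requiring genuine care to be the positivity of the Bochner integral of a positive operator field; the remaining steps are purely formal consequences of Theorem \ref{th1}.
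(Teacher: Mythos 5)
Your proposal is correct and matches the paper's own argument exactly: the paper deduces the corollary from the observation that the left-hand side of the identity in Theorem \ref{th1} is a positive element of ${\mathfrak A}$, which is precisely your $\mathcal{L} \geq 0$ followed by rearrangement. Your verification that the Bochner integral of a field of positive elements is positive (via positive functionals) is a detail the paper leaves implicit, but it is the same route, just spelled out more fully.
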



A weighted extension of norm equality \eqref{00} can be deduced from \eqref{oit} as follows:

\begin{corollary}
Let $x_1, \cdots, x_n, y_1, \cdots, y_n\in {\mathscr H}$ and let
$r_1, \cdots, r_n$ be positive numbers. Then
\begin{eqnarray*}
\sum_{1 \leq i < j \leq n}\left\|\sqrt{\frac{r_i}{r_j}}x_i-\sqrt{\frac{r_j}{r_i}}x_j\right\|^2 &+& \sum_{1 \leq i < j \leq n}\left\|\sqrt{\frac{r_i}{r_j}}y_i-\sqrt{\frac{r_j}{r_i}}y_j\right\|^2 \\&=&
\sum_{i,j=1}^n\left\|\sqrt{\frac{r_i}{r_j}}x_i-\sqrt{\frac{r_j}{r_i}}y_j\right\|^2 -\left\|\sum_{i=1}^n
(x_i-y_i)\right\|^2\,.
\end{eqnarray*}
\end{corollary}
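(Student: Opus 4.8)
The plan is to deduce this vector identity from the operator equality \eqref{oit} of Corollary \ref{cor0} by encoding each vector as a rank one operator. I would fix a unit vector $e \in \mathscr{H}$ and set $A_i = x_i \otimes e$ and $B_i = y_i \otimes e$ for $1 \leq i \leq n$. These are bounded (indeed rank one) operators in $\mathbb{B}(\mathscr{H})$, so \eqref{oit} applies verbatim with the same weights $r_1, \ldots, r_n$. The point of this choice is that scalar combinations of the $A_i$ and $B_i$ stay inside the rank one class with the same fixed second factor $e$, so that every $|\cdot|^2$-term collapses to a scalar multiple of one and the same positive operator.

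The key computation I would isolate first is that, for any $v \in \mathscr{H}$, one has $|v \otimes e|^2 = \|v\|^2\,(e \otimes e)$. This follows from the adjoint rule $(x \otimes y)^* = y \otimes x$ together with the composition rule $(u \otimes v)(w \otimes z) = \langle w, v\rangle\,(u \otimes z)$, both read off from the defining relation $(x\otimes y)(z)=\langle z,y\rangle x$; indeed $(v \otimes e)^*(v \otimes e) = (e \otimes v)(v \otimes e) = \langle v,v\rangle\,(e \otimes e) = \|v\|^2\,(e \otimes e)$. Since $v \mapsto v \otimes e$ is linear, each operator appearing in \eqref{oit} is again of the form $w \otimes e$: concretely $\sqrt{r_i/r_j}\,A_i - \sqrt{r_j/r_i}\,A_j = \big(\sqrt{r_i/r_j}\,x_i - \sqrt{r_j/r_i}\,x_j\big)\otimes e$, with the analogous expressions for the $B$-terms and for the mixed $A$--$B$ terms, while $\sum_{i=1}^n (A_i - B_i) = \big(\sum_{i=1}^n (x_i - y_i)\big)\otimes e$.

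Substituting these into \eqref{oit} and applying the identity $|w \otimes e|^2 = \|w\|^2\,(e \otimes e)$ to every summand turns the operator equation into one scalar equation multiplied throughout by the single nonzero positive operator $e \otimes e$. The left side becomes $\big(\sum_{1 \leq i<j \leq n}\|\sqrt{r_i/r_j}\,x_i-\sqrt{r_j/r_i}\,x_j\|^2 + \sum_{1 \leq i<j \leq n}\|\sqrt{r_i/r_j}\,y_i-\sqrt{r_j/r_i}\,y_j\|^2\big)\,(e\otimes e)$, and the right side becomes $\big(\sum_{i,j=1}^n\|\sqrt{r_i/r_j}\,x_i-\sqrt{r_j/r_i}\,y_j\|^2 - \|\sum_{i=1}^n (x_i-y_i)\|^2\big)\,(e\otimes e)$. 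Because $e \otimes e \neq 0$, comparing the scalar coefficients yields exactly the asserted equality.

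The only place requiring genuine care is the rank one algebra underlying the second step: verifying $(x\otimes y)^* = y \otimes x$ and the composition rule, and confirming that linearity of $v \mapsto v \otimes e$ lets every operator in \eqref{oit} be rewritten as (a vector)$\,\otimes e$, so that all terms are proportional to the common operator $e \otimes e$. Once this bookkeeping is settled the conclusion is immediate. A fully self-contained alternative would be to expand each squared norm via the inner product $\langle\cdot,\cdot\rangle$ and collect terms, but the reduction above is shorter and displays the corollary transparently as the rank one specialization of \eqref{oit}.
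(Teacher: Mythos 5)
Your proposal is correct and follows essentially the same route as the paper: the paper likewise fixes a non-zero vector $e$, sets $A_i = x_i \otimes e$ and $B_i = y_i \otimes e$, and uses the rank one identity $|v\otimes e|^2 = \|v\|^2 (e\otimes e)$ to reduce the vector equality to the operator equality \eqref{oit}. The only cosmetic difference is that you normalize $e$ to be a unit vector, which is harmless since the identity $(v\otimes e)^*(v\otimes e)=\|v\|^2(e\otimes e)$ holds for any non-zero $e$.
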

\begin{proof}
Let $e$ be a non-zero vector of ${\mathscr H}$ and set $A_i=x_i
\otimes e, B_i=y_i \otimes e$ for $i=1, \cdots, n$. It follows from
the elementary properties of rank one operators and equality
\eqref{oit} that
\begin{eqnarray*}
&&\hspace{-1in}\Big(\sum_{i,j=1}^n\left\|\sqrt{\frac{r_i}{r_j}}x_i-\sqrt{\frac{r_j}{r_i}}x_j\right\|^2+
\sum_{i,j=1}^n\left\|\sqrt{\frac{r_i}{r_j}}y_i-\sqrt{\frac{r_j}{r_i}}y_j\right\|^2\Big) e\otimes
e\\
&=& \sum_{i,j=1}^n \left|\left(\sqrt{\frac{r_i}{r_j}}x_i-\sqrt{\frac{r_j}{r_i}}x_j\right)\otimes e\right|^2 + \sum_{i,j=1}^n \left|\left(\sqrt{\frac{r_i}{r_j}}y_i-\sqrt{\frac{r_j}{r_i}}y_j\right)\otimes e\right|^2 \\
&=&\sum_{i,j=1}^n\left|\sqrt{\frac{r_i}{r_j}}A_i-\sqrt{\frac{r_j}{r_i}}A_j\right|^2+\sum_{i,j=1}^n\left|\sqrt{\frac{r_i}{r_j}}B_i-\sqrt{\frac{r_j}{r_i}}B_j\right|^2\\
&=& 2\sum_{i,j=1}^n\left|\sqrt{\frac{r_i}{r_j}}A_i-\sqrt{\frac{r_j}{r_i}}B_j\right|^2 -
2\left|\sum_{i=1}^n(A_i-B_i)\right|^2\\
&=&2\sum_{i,j=1}^n \left|\left(\sqrt{\frac{r_i}{r_j}}x_i-\sqrt{\frac{r_j}{r_i}}y_j\right)\otimes e\right|^2 -2 \left|\sum_{i=1}^n (x_i-y_i) \otimes e\right|^2\,.\\
&=& \left(2\sum_{i,j=1}^n\left\|\sqrt{\frac{r_i}{r_j}}x_i-\sqrt{\frac{r_j}{r_i}}y_j\right\|^2 -2\left\|\sum_{i=1}^n
(x_i-y_i)\right\|^2\right)e\otimes e\,,
\end{eqnarray*}
from which we conclude the result.
\end{proof}


\section{A general parallelogram law}

\noindent An extension of a result of \cite{A-S} for $n$-tuples may
be stated as follows.

$\bullet$ If $f: [0,\infty) \to [0,\infty)$ is a convex function,
then for any positive operators $A_1, \cdots, A_n$, any nonnegative
numbers $\alpha_1, \cdots, \alpha_n$ with $\sum_{j=1}^n\alpha_j=1$
and any unitarily invariant norm $|||\cdot|||$ on ${\mathbb
B}({\mathscr H})$
\begin{eqnarray}\label{4.1}
\left|\left|\left|\sum_{j=1}^n \alpha_jf(A_j)\right|\right|\right|
\geq \left|\left|\left|f\left(\sum_{j=1}^n
\alpha_jA_j\right)\right|\right|\right|\,.
\end{eqnarray}

\noindent The following result is also known \cite{KOS}:

$\bullet$ If $f: [0,\infty) \to [0,\infty)$ is a convex function
with $f(0)=0$, then for any positive operators $A_1, \cdots, A_n$
and any unitarily invariant norm $|||\cdot|||$ on ${\mathbb
B}({\mathscr H})$
\begin{eqnarray}\label{4.2}
\left|\left|\left|f\left(\sum_{j=1}^n
A_j\right)\right|\right|\right| \geq \left|\left|\left|\sum_{j=1}^n
f(A_j)\right|\right|\right| \,.
\end{eqnarray}

The reverse inequalities hold for concave functions; see \cite{H-K}
for more details. We now prove another significant theorem.

\begin{theorem}\label{main}
Let $A_1, \cdots, A_n \in \mathcal{C}_{|||\cdot|||}$, $r_1, \cdots,
r_n$ be positive real numbers with $\sum_{i=1}^n\frac{1}{r_i}=1$,
let $g$ be a nonnegative convex function on $[0,\infty)$ such that
$g(0)=0$ and let $f(t)=g(t^2)$. Then
\begin{eqnarray}\label{4.3}
\left|\left|\left|\sum_{i=1}^n\frac{1}{r_i}f\left(|r_iA_i|\right)\right|\right|\right|
\geq \left|\left|\left|\sum_{1 \leq i<j\leq
n}f\left(\left|\sqrt{\frac{r_i}
{r_j}}A_i-\sqrt{\frac{r_j}{r_i}}A_j\right|\right)+
f\left(\left|\sum_{i=1}^nA_i\right|\right)\right|\right|\right|
\end{eqnarray}
for all unitarily invariant norm. If $g$ is concave, then the
reverse of \eqref{4.3} holds.
\end{theorem}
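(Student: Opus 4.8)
The plan is to recognize that this is essentially a two-step concatenation of the two cited unitarily invariant norm inequalities \eqref{4.1} and \eqref{4.2}, glued together by the algebraic identity \eqref{zf} and the elementary functional-calculus observation that $f(|C|)=g(|C|^2)$ for every operator $C$ (because $f(t)=g(t^2)$ and $|C|^2=C^*C$). Accordingly, the first move is to recast \eqref{zf} into a form that simultaneously displays one positive operator as a weighted average and as a plain sum. Since $\sum_{i=1}^n\frac{1}{r_i}=1$ and $|r_iA_i|^2=r_i^2|A_i|^2$, we have $r_i|A_i|^2=\frac{1}{r_i}|r_iA_i|^2$, so \eqref{zf} reads
$$\sum_{i=1}^n\frac{1}{r_i}\,|r_iA_i|^2=\sum_{1\le i<j\le n}\left|\sqrt{\frac{r_i}{r_j}}A_i-\sqrt{\frac{r_j}{r_i}}A_j\right|^2+\left|\sum_{i=1}^nA_i\right|^2.$$
Writing $R_i:=|r_iA_i|^2$, $P_{ij}:=\left|\sqrt{\frac{r_i}{r_j}}A_i-\sqrt{\frac{r_j}{r_i}}A_j\right|^2$ and $Q:=\left|\sum_iA_i\right|^2$, all positive, this is simply $\sum_i\frac{1}{r_i}R_i=\sum_{i<j}P_{ij}+Q$.

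Next I would apply the two known inequalities to this common operator. Invoking \eqref{4.1} with the convex function $g$, the weights $\alpha_i=\frac{1}{r_i}$ (which sum to $1$ by hypothesis), and the positive operators $R_i$ gives
$$\left|\left|\left|\sum_{i=1}^n\frac{1}{r_i}\,g(R_i)\right|\right|\right|\ge\left|\left|\left|g\left(\sum_{i=1}^n\frac{1}{r_i}R_i\right)\right|\right|\right|.$$
Substituting the identity into the argument and then invoking \eqref{4.2} with $g$ (which satisfies $g(0)=0$) and the positive operators $P_{ij}$ and $Q$ gives
$$\left|\left|\left|g\left(\sum_{1\le i<j\le n}P_{ij}+Q\right)\right|\right|\right|\ge\left|\left|\left|\sum_{1\le i<j\le n}g(P_{ij})+g(Q)\right|\right|\right|.$$
Chaining these two inequalities and translating back through $g(R_i)=f(|r_iA_i|)$, $g(P_{ij})=f\!\left(\left|\sqrt{\frac{r_i}{r_j}}A_i-\sqrt{\frac{r_j}{r_i}}A_j\right|\right)$ and $g(Q)=f\!\left(\left|\sum_iA_i\right|\right)$ yields exactly \eqref{4.3}.

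For the concave case I would only observe that, as recorded just before the theorem, both \eqref{4.1} and \eqref{4.2} reverse when the underlying function is concave; hence each link in the above chain reverses and we obtain the reverse of \eqref{4.3}. I expect the one point genuinely requiring attention to be the bookkeeping in the opening step: the hypothesis $\sum_i\frac{1}{r_i}=1$ is doing double duty, since it is both precisely the condition under which \eqref{zf} is valid and precisely what makes $\left(\frac{1}{r_i}\right)_{i}$ an admissible weight vector for \eqref{4.1}. Once these roles are matched, and the translation $f(|C|)=g(|C|^2)$ is used consistently, the remainder is a mechanical composition of the two quoted results with no further analytic input.
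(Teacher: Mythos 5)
Your proposal is correct and follows essentially the same route as the paper: apply \eqref{4.1} with weights $\alpha_i=\frac{1}{r_i}$ to pass $g$ inside the weighted sum, rewrite $\sum_i\frac{1}{r_i}|r_iA_i|^2$ via \eqref{zf}, and then apply \eqref{4.2} to split $g$ over the resulting sum of positive operators, with the concave case obtained by reversing each step. The bookkeeping point you flag, $r_i|A_i|^2=\frac{1}{r_i}|r_iA_i|^2$, is exactly the identification the paper uses implicitly.
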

\begin{proof}
\begin{eqnarray*}
\left|\left|\left|\sum_{i=1}^n\frac{1}{r_i}f\left(|r_iA_i|\right)\right|\right|\right|
&=&\left|\left|\left|\sum_{i=1}^n\frac{1}{r_i}g\left(|r_iA_i|^2\right)\right|\right|\right|\\
&\geq&\left|\left|\left|g\left(\sum_{i=1}^n\frac{1}{r_i}|r_iA_i|^2\right)\right|\right|\right| \quad\quad\qquad\qquad\qquad\qquad\qquad\quad\ (\textrm{by~} \eqref{4.1})\\
&=& \left|\left|\left|g\left(\sum_{1 \leq i<j\leq
n}\left|\sqrt{\frac{r_i}
{r_j}}A_i-\sqrt{\frac{r_j}{r_i}}A_j\right|^2+\left|\sum_{i=1}^nA_i\right|^2\right)\right|\right|\right| \quad\quad(\textrm{by~} \eqref{zf})\\
&\geq& \left|\left|\left|\sum_{1 \leq i<j\leq
n}g\left(\left|\sqrt{\frac{r_i}
{r_j}}A_i-\sqrt{\frac{r_j}{r_i}}A_j\right|^2\right)+g\left(\left|\sum_{i=1}^nA_i\right|^2\right)\right|\right|\right| (\textrm{by~} \eqref{4.2})\\
&=& \left|\left|\left|\sum_{1 \leq i<j\leq
n}f\left(\left|\sqrt{\frac{r_i}
{r_j}}A_i-\sqrt{\frac{r_j}{r_i}}A_j\right|\right)+f\left(\left|\sum_{i=1}^nA_i\right|\right)\right|\right|\right|\,.
\end{eqnarray*}
\end{proof}

The function $g(t)=t^p$ for $1 \leq p < \infty$ ($g(t)=t^p$ for $0 <
p \leq 1 $, resp.) is convex (concave, resp.) on $[0,\infty)$. Hence
we get the following corollary.

\begin{corollary}
Let $A_1, \cdots, A_n \in \mathcal{C}_p$ and $r_1, \cdots, r_n$ be
positive real numbers with $\sum_{i=1}^n\frac{1}{r_i}=1$. Then
\begin{eqnarray*}
\sum_{i=1}^n r_i^{p-1}\|A_i\|_p^p\geq \sum_{1 \leq i<j\leq n}
\left\|\sqrt{\frac{r_i}
{r_j}}A_i-\sqrt{\frac{r_j}{r_i}}A_j\right\|_p^p +
\left\|\sum_{i=1}^nA_i\right\|_p^p
\end{eqnarray*}
for any $2 \leq p <\infty$. The reverse inequality holds for any $0
< p \leq 2 $.
\end{corollary}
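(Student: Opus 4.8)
The plan is to obtain this corollary as a direct specialization of Theorem \ref{main} to the trace norm $\|\cdot\|_1$, with the convex (resp. concave) function $g$ chosen so that the composite $f(t)=g(t^2)$ becomes the $p$-th power map. Concretely, I would take $g(t)=t^{p/2}$. Then $g$ is nonnegative on $[0,\infty)$ with $g(0)=0$, and $f(t)=g(t^2)=(t^2)^{p/2}=t^p$, which is exactly the exponent needed to produce Schatten $p$-norms. The crucial observation is that $g(t)=t^{p/2}$ is convex on $[0,\infty)$ precisely when $p/2\ge 1$, i.e. $p\ge 2$, and concave precisely when $0<p/2\le 1$, i.e. $0<p\le 2$; this is exactly the dichotomy recorded in the statement, and it is the reason the exponent must be halved rather than naively set equal to $p$.

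With this choice I would apply Theorem \ref{main} with $|||\cdot|||=\|\cdot\|_1$. On the left-hand side of \eqref{4.3}, since $r_i>0$ one has $f(|r_iA_i|)=|r_iA_i|^p=r_i^p|A_i|^p$, so $\frac{1}{r_i}f(|r_iA_i|)=r_i^{p-1}|A_i|^p$, and $\sum_{i=1}^n r_i^{p-1}|A_i|^p$ is a \emph{positive} operator. The central technical point is that for a positive operator the trace norm equals the trace, and the trace is additive; hence
\[
\Big\|\sum_{i=1}^n r_i^{p-1}|A_i|^p\Big\|_1=\sum_{i=1}^n r_i^{p-1}\,\textrm{tr}\,|A_i|^p=\sum_{i=1}^n r_i^{p-1}\|A_i\|_p^p,
\]
where the last step is \eqref{trace}. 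Treating the right-hand side of \eqref{4.3} identically, each summand $f(|\cdot|)=|\cdot|^p$ is positive, so the trace norm of the whole sum distributes as $\sum_{1\le i<j\le n}\|\sqrt{r_i/r_j}A_i-\sqrt{r_j/r_i}A_j\|_p^p+\|\sum_{i=1}^n A_i\|_p^p$, again by additivity of the trace together with \eqref{trace}.

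For $p\ge 2$ the function $g$ is convex with $g(0)=0$, so inequality \eqref{4.3} yields the displayed estimate; for $0<p\le 2$ the function $g$ is concave, so Theorem \ref{main} gives the reverse inequality, as claimed (the boundary $p=2$ being the equality case in which $g$ is linear). The one bookkeeping point I would check carefully is membership: although the hypothesis only gives $A_i\in\mathcal{C}_p$, all operators actually presented to the trace norm — namely $|A_i|^p$, the cross-difference absolute values raised to the $p$, and $|\sum_i A_i|^p$ — lie in $\mathcal{C}_1$ exactly because $A_i\in\mathcal{C}_p$, so every trace that appears is finite and the inequality chain of Theorem \ref{main} is legitimate for the trace norm. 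I expect the substantive ingredient to be precisely this positivity-plus-additivity step, which is what allows the trace norm to be pulled across the sum (a manoeuvre that fails for a general unitarily invariant norm); everything else is routine manipulation of the identity $\||A|^p\|_1=\|A\|_p^p$.
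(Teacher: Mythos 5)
Your proposal is correct and follows essentially the same route as the paper: apply Theorem \ref{main} with $g(t)=t^{p/2}$ (convex for $p\geq 2$, concave for $0<p\leq 2$) to the trace norm, then use positivity of the summands, additivity of the trace, and the identity $\left\|\,|A|^p\,\right\|_1=\|A\|_p^p$ to distribute the trace norm across the sum. The paper's proof is exactly this computation written out as a chain of equalities around the single inequality from Theorem \ref{main}.
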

\begin{proof} Let $2 \leq p <\infty$.
\begin{eqnarray*}
\sum_{i=1}^n r_i^{p-1}\|A_i\|_p^p &=& \sum_{i=1}^n
r_i^{p-1}\|\,|A_i|^p\|_1 \qquad\qquad\qquad\qquad\qquad\qquad\qquad\qquad\qquad (\textrm{by~} \eqref{trace})\\
&=&{\rm tr}(\sum_{i=1}^n r_i^{p-1}|A_i|^p)\\
&=&\left\|\sum_{i=1}^n \frac{1}{r_i}\left|r_iA_i\right|^p\right\|_1\\
&\geq&\left \|\sum_{1 \leq i<j\leq n} \left|\sqrt{\frac{r_i}
{r_j}}A_i-\sqrt{\frac{r_j}{r_i}}A_j\right|^p +
\left|\sum_{i=1}^nA_i\right|^p\right\|_1\\
&&\qquad\qquad\qquad\qquad\qquad (\textrm{by~ Theorem~}\ref{main} \textrm{~for~} g(t)=t^{\frac{p}{2}};\, 2 \leq p < \infty)\\\\
&=&{\rm tr}\left(\sum_{1 \leq i<j\leq n} \left|\sqrt{\frac{r_i}
{r_j}}A_i-\sqrt{\frac{r_j}{r_i}}A_j\right|^p +
\left|\sum_{i=1}^nA_i\right|^p\right)\\
&=&\sum_{1 \leq i<j\leq n}{\rm tr}\left(\left|\sqrt{\frac{r_i}
{r_j}}A_i-\sqrt{\frac{r_j}{r_i}}A_j\right|^p\right) + {\rm tr}\left(
\left|\sum_{i=1}^nA_i\right|^p\right)\\
&=&\sum_{1 \leq i<j\leq n} \left \|\,\left|\sqrt{\frac{r_i}
{r_j}}A_i-\sqrt{\frac{r_j}{r_i}}A_j\right|^p \right\|_1+\left\|\,
\left|\sum_{i=1}^nA_i\right|^p\right\|_1\\
&=&\sum_{1 \leq i<j\leq n} \left\|\sqrt{\frac{r_i}
{r_j}}A_i-\sqrt{\frac{r_j}{r_i}}A_j\right\|^p_p+ \left\|\sum_{i=1}^nA_i\right\|_p^p\,.\qquad\quad\qquad\quad (\textrm{by~} \eqref{trace})\\\\
\end{eqnarray*}
The proof for the reverse inequality is similar.
\end{proof}


\begin{thebibliography}{10}

\bibitem{A-B-P} S. Abramovich, J. Bari\'c and J.E. Pecari\'c, \textit{A new proof of an inequality of Bohr for Hilbert space
operators}, Linear Algebra Appl. \textbf{430} (2009), no. 4,
1432--1435.

\bibitem{A-S} J.S. Aujla and F.C. Silva, \textit{Weak majorization
inequalities and convex functions}, Linear Algebra Appl.
\textbf{369} (2003), 217--233.

\bibitem{C-P} W.-S. Cheung and J. Pe\v{c}ari\'{c}, \textit{Bohr's inequalities for Hilbert space operators},
J. Math. Anal. Appl. \textbf{323} (2006), no. 1, 403--412.

\bibitem{C-C-M-P} Y.J. Cho, V. \v Culjak, M. Mati\' c and J.E. Pe\v cari\' c, \textit{On parallelogram
law and Bohr's inequality in $n$-inner product spaces}, Inequality
theory and applications, Vol. 2 (Chinju/Masan, 2001), 79--90, Nova
Sci. Publ., Hauppauge, NY, 2003.

\bibitem{EGE} O. E\u gecio\u glu, \textit{Parallelogram-law-type identities}, Linear
Algebra Appl. \textbf{225} (1995), 1--12.

\bibitem{H-P-P} F. Hansen, J.E. Pe\v cari\'c and I. Peri\'c, \textit{Jensen's operator inequality
and its converses}, Math. Scand. \textbf{100} (2007), no. 1, 61--73.

\bibitem{H-P} F. Hansen and G.K. Pedersen, \textit{Jensen's operator inequality}, Bull.
London Math. Soc. \textbf{35} (2003), 553--564.

\bibitem{HIR} O. Hirzallah, \textit{Non-commutative operator Bohr inequality},
J. Math. Anal. Appl. \textbf{282} (2003), 578--583.

\bibitem{H-K} O. Hirzallah and F. Kittaneh, \textit{Non-commutative Clarkson
inequalities for $n$-tuples of operators}, Integral Equations
Operator Theory \textbf{60} (2008), no. 3, 369--379.

\bibitem{H-K-M} O. Hirzallah, F. Kittaneh and M.S. Moslehian,
\textit{Schatten $p$-norm inequalities related to a characterization
of inner product spaces}, Math. Inequal. Appl. \textbf{13} (2010), no. 2,  235–-241.

\bibitem{KAT} M. Kato, \textit{A note on a generalized parallelogram law and the
Littlewood matrices}, Bull. Kyushu Inst. Tech. Math. Natur. Sci.
\textbf{33} (1986), 37--39.

\bibitem{KOS} T. Kosem, \textit{Inequalities between $\Vert f(A+B)\Vert$ and $\Vert f(A)+f(B)\Vert$}, Linear Algebra Appl. \textbf{418}
(2006), no. 1, 153--160.

\bibitem{M-P-P} M.S. Moslehian, J.E. Pe\v{c}ari\'{c} and I. Peri\'{c}, \textit{An operator extension of Bohr's inequality}, Bull. Iranian Math. Soc.   \textbf{35} (2009), no. 2, 67--74.

\bibitem{SIM} B. Simon, \textit{Trace Ideals and their Applications}, Cambridge University Press, Cambridge,
1979.

\bibitem{ZHA} F. Zhang, \textit{On the Bohr inequality of
operators}, J. Math. Anal. Appl. \textbf{333} (2007), 1264–-1271.

\bibitem{ZEN} W.L. Zeng, \textit{An extension of the parallelogram
characterization of inner product spaces} J. Math. Res. Exposition
\textbf{2} (1982), no. 3, 23--25.

\bibitem{Z-F} H. Zuo and M. Fujii, \textit{GMatrix order in Bohr inequality for operators}, Banach J. Math. Anal. \textbf{4} (2010), no. 1, 21--27.

\end{thebibliography}
\end{document}